\newtheorem{theorem}{Theorem}[section]
\newtheorem{lemma}{Lemma}[section]
\newtheorem*{lemma*}{Lemma A}
\numberwithin{equation}{section}
\renewcommand{\eqref}[1]{(\ref{#1})}
\begin{document}
 \pagenumbering{arabic}
\begin{spacing}{0.85}
\begin{frontmatter}
\title{{\bf \noindent A second-order accurate numerical method for the  space-time tempered fractional  diffusion-wave equation}}
\author{Minghua Chen$^{*}$, Weihua Deng}
\cortext[cor2]{Corresponding author. E-mail: chenmh@lzu.edu.cn;  dengwh@lzu.edu.cn}
\address{School of Mathematics and Statistics, Gansu Key Laboratory of Applied Mathematics and Complex Systems, Lanzhou University, Lanzhou 730000, P.R. China}


\begin{abstract}
This paper focuses on providing the high order algorithms for the space-time tempered fractional  diffusion-wave equation.
The designed schemes are unconditionally stable and have the global truncation error $\mathcal{O}(\tau^2+h^2)$, being theoretically proved and numerically   verified.

\medskip
\noindent {\bf Keywords:}
Space-time tempered fractional  diffusion-wave equation;
Integro-differential equation;
Numerical stability and convergence
\end{abstract}
\end{frontmatter}

\section{Introduction}
We study a second-order accurate numerical method in both space and time  for the integro-differential equation whose prototype is, for $1< \alpha,\gamma\leq 2, \lambda\geq 0$,
\begin{equation} \label{1.1}
\frac{\partial}{\partial t}u(x,t)= I_t^{\gamma-1,\lambda} {\nabla}_x^{\alpha}u(x,t)
= \frac{1}{\Gamma(\gamma-1)}\int_{0}^t{\left(t-\tau\right)^{\gamma-2}}e^{-\lambda(t-\tau)}{{\nabla}_x^{\alpha} u(x,\tau)}d\tau,\\
\end{equation}
with the initial condition  $u(x,0)=u_0(x)$, $x \in \Omega=(a,b)$ and the homogeneous Dirichlet boundary conditions, characterizing the propagation of wave with the tempered power law decay.
Here  the tempered fractional  integral $I_t^{\beta,\lambda} $ with $\beta=\gamma-1>0$ is defined as \cite{Cartea:07,Chen:15}
\begin{equation}\label{1.3}
I_t^{\beta,\lambda} u(x,t)=\frac{1}{\Gamma(\beta)}\int_{0}^t{\left(t-\tau\right)^{\beta-1}}e^{-\lambda(t-\tau)}{u(x,\tau)}d\tau, ~~~~t>0.
\end{equation}
The Riesz fractional derivative with $\alpha \in (1,2)$,
is defined as  \cite{Podlubny:99}
\begin{equation}\label{1.2}
{\nabla}_x^{\alpha}u(x,t) =-\kappa_{\alpha}\left( _{a}D_x^{\alpha}+ _{x}\!D_{b}^{\alpha} \right)u(x,t)  ~~{\rm with}~~\kappa_{\alpha}=\frac{1}{2\cos(\alpha \pi/2)},
\end{equation}
\begin{equation*}
\begin{split}
 _{a}D_x^{\alpha}u(x,t)
 & =
\frac{1}{\Gamma(2-\alpha)} \displaystyle \frac{\partial^2}{\partial x^2}
 \int_{a}\nolimits^x{\left(x-\xi\right)^{1-\alpha}}{u(\xi,t)}d\xi,
 ~~
 _{x}D_{b}^{\alpha}u(x,t)=
 \frac{1}{\Gamma(2-\alpha)}\frac{\partial^2}{\partial x^2}
\int_{x}\nolimits^{b}{\left(\xi-x\right)^{1-\alpha}}{u(\xi,t)}d\xi.
\end{split}
\end{equation*}

It can be noted that, if $\lambda=0$,  (\ref{1.1}) reduces to the following  space-time fractional  diffusion-wave equation \cite{Fujita:1990},
\begin{equation*}
\begin{split}
{^c}\!{D}_t^\gamma u(x,t)= {\nabla}_x^{\alpha} u(x,t)~~{\rm for}~~ 1<\alpha, \gamma\leq 2.
\end{split}
\end{equation*}

Numerical methods for the time discretization of (\ref{1.1}) with $\lambda=0, \alpha=2$, i.e., the  time fractional diffusion-wave equation,  have been proposed  by various authors \cite{Cuesta:06,Liu:13,Mustapha:13,Sun:06,Yang:14,Zeng:15}. For the time fractional diffusion-wave equation,
there are already several types of second-order discretization   schemes.   For example,  based on the second-order fractional Lubich's methods \cite{Lubich:86},
Cuesta (2006) et al derive the second-order error bounds of the time discretization in a Banach space with ${\nabla}_x^{2}$ as a  sectorial operator  \cite{Cuesta:06}; 
and Yang (2014) et al obtain the  second-order convergence  schemes with $1\leq \gamma \leq 1.71832$ \cite{Yang:14}.
McLean and Mustapha (2007) study the Crank-Nicolson scheme for the time discretization  with the non-uniform grid in time direction \cite{Mclean:07}.
Using the fractional trapezoidal rule, Zeng (2015)  obtains  the second-order schemes \cite{Zeng:15}.

For the space-time tempered  fractional diffusion-wave equation  of (\ref{1.1}) with $\lambda=0$,
Mainardi (2001) et al obtain the fundamental solution of the space-time fractional diffusion equation \cite{Mainardi:2001};
Metzler and Nonnenmacher (2002) investigate the physical background and implications of a space-and time-fractional diffusion and wave equations \cite{{Metzler:02}}.
Recently, the numerical solutions of space-time fractional diffusion-wave equations
and space fractional diffusion-wave equations are, respectively,  discussed  in \cite{Garg:14} and \cite{Deng:15}.
However, it seems that achieving a second-order accurate scheme for (\ref{1.1}) is not an easy task.
This paper focuses on providing effective and highly accurate numerical algorithms for the  space-time tempered fractional  diffusion-wave equation   (\ref{1.1}).
The designed schemes are unconditionally stable and have the global truncation error $\mathcal{O}(\tau^2+h^2)$, being theoretically proved and numerically   verified.
It  can be easily extended to the problems discussed in \cite{Deng:15,Garg:14,Yang:14}.

The rest of the paper is organized as follows. In the next section, we propose the second-order algorithm to the model. In Sec. 3, we do the detailedly theoretical analyses for the stability and convergence with the second order accuracy in both time and space directions for the derived schemes. To verify the theoretical results, especially the convergence orders, the extensive numerical experiments are performed in Sec. 4. The paper is concluded with some remarks in the last section.

\section{High order schemes for the  space-time tempered fractional  diffusion-wave equation}
Let the mesh points $x_i=ih$ for $i=0,1,\ldots,M$, and $t_n=n\tau$ for 
$n=0,1,\ldots,N$,   where $h=b/M$ and $\tau=T/N$ are the uniform space stepsize and time steplength, respectively.
Denote $u_i^n$ as the numerical approximation to $u(x_i,t_n)$.
Nowadays, there are already several types of high
order discretization schemes for the Riemann-Liouville space fractional derivatives \cite{Chen:0013,Chen:14,Hao:15,Ortigueira:06,Sousa:12,Tian:12}.
Here, we utilize the second-order formula \cite{Chen:14}  to approximate  the Riesz fractional derivative (\ref{1.2}),  that is
\begin{equation}\label{2.1}
\begin{split}
 \nabla_x^{\alpha} u(x,t)|_{x=x_i}
&=-\frac{\kappa_{\alpha}}{\Gamma(4-\alpha)h^\alpha} \sum_{j=1}^{M-1}w_{i,j}^{\alpha}u(x_{j},t)   +\mathcal{O}(h^2)
\end{split}
\end{equation}
with  $i=1,\ldots,M-1$, where
\begin{equation*}
w_{i,j}^{\alpha}=\left\{ \begin{array}
 {l@{\quad } l}
  w_{i-j+1}^{\alpha},&j < i-1,\\
  w_{0}^{\alpha}+w_{2}^{\alpha} ,&j=i-1,\\
 2w_{1}^{\alpha},&j=i,\\
w_{0}^{\alpha}+w_{2}^{\alpha} ,&j=i+1,\\
w_{j-i+1}^{\alpha} ,&j>i+1,
 \end{array}
 \right.~~~~{\rm and }~~~~
w_m^{\alpha}=\left\{ \begin{array}
 {l@{\quad } l}

1,&m =0,\\

-4+2^{3-\alpha},&m =1,\\

6-2^{5-\alpha}+3^{3-\alpha},&m =2,\\

(m+1)^{3-\alpha}-4m^{3-\alpha}+6(m-1)^{3-\alpha}\\
~~~~~~~~~~~-4(m-2)^{3-\alpha}+(m-3)^{3-\alpha},&m \geq 3.\\
 \end{array}
 \right.
\end{equation*}
Further denoting   $u^n=\left[u_1^n,u_2^n,\ldots,u_{M-1}^n\right]^T$,
from (\ref{2.1}), then we obtain
\begin{equation}\label{2.2}
\begin{split}
 \nabla_h^{\alpha} u_i^n
=-\frac{\kappa_{\alpha}}{{\Gamma(4-\alpha)}h^\alpha} \sum_{j=1}^{M-1}w_{i,j}^{\alpha}u_j^n
~~{\rm and}~~
 \nabla_h^{\alpha} u^n
=-\frac{\kappa_{\alpha}}{{\Gamma(4-\alpha)}h^\alpha} A_\alpha u^n,
\end{split}
\end{equation}
where the matrix
\begin{equation}\label{2.3}
A_\alpha=\left [ \begin{matrix}
2w_{1}^{\alpha} &w_{0}^{\alpha}+w_{2}^{\alpha}&w_{3}^{\alpha}    &      \cdots   & w_{M-2}^{\alpha}    &   w_{M-1}^{\alpha}   \\
w_{0}^{\alpha}+w_{2}^{\alpha}&  2w_{1}^{\alpha}  &w_{0}^{\alpha}+w_{2}^{\alpha}&  w_{3}^{\alpha}   &     \cdots   & w_{M-2}^{\alpha}\\
w_{3}^{\alpha}            &w_{0}^{\alpha}+w_{2}^{\alpha}&2w_{1}^{\alpha}       & w_{0}^{\alpha}+w_{2}^{\alpha}&     \ddots              & \vdots  \\
\vdots                   &          \ddots         &       \ddots            &        \ddots            &      \ddots             &  w_{3}^{\alpha} \\
  w_{M-2}^{\alpha} &   \ddots         &       \ddots        &        \ddots            &   2w_{1}^{\alpha}       & w_{0}^{\alpha}+w_{2}^{\alpha} \\
 w_{M-1}^{\alpha}    &   w_{M-2}^{\alpha}  &   \cdots          &         \cdots           &w_{0}^{\alpha}+w_{2}^{\alpha}& 2w_{1}^{\alpha}  
 \end{matrix}
 \right ].
\end{equation}
We know that the tempered fractional integral (\ref{1.3})  has the second-order approximation \cite{Chen:15}
\begin{equation}\label{2.4}
I_t^{\beta,\lambda}  u(x,t)|_{t=t_n}=
\frac{1}{\Gamma(\beta)}\int_{0}^{t_n}{\left(t_n-\tau\right)^{\beta-1}}e^{-\lambda(t_n-\tau)} {u(x,\tau)}d\tau
=\tau^{\beta}\sum_{k=0}^{n}l_k^{\beta}u(x,t_{n-k})+\mathcal{O}(\tau^2),
\end{equation}
where $l_k^{\beta}$ are the coefficients of the Taylor expansions of the generating function
\begin{equation}\label{2.5}
l^{\beta}(z)= \left(1-\frac{z}{e^{\lambda \tau}}\right) ^{-\beta}\left(  1+  \frac{1}{2}\left( 1- \frac{z}{e^{\lambda \tau}} \right)\right) ^{-\beta}= \sum_{k=0}^{\infty}l_k^{\beta}z^k
\end{equation}
with
\begin{equation}\label{2.55}
 l_k^{\beta} =e^{-\lambda k \tau}\left(\frac{3}{2}\right)^{-\beta} \sum_{m=0}^{k} 3^{-m}g_m^{-\beta}g_{k-m}^{-\beta},~~\beta=\gamma-1\in(0,1].
\end{equation}
Without loss of generality, we suppose (\ref{1.1}) with the  zero initial value \cite{Ji:15} and  add a force term $f(x,t)$ on the right side of  (\ref{1.1}).
Considering  (\ref{1.1}) at the point $(x_i,t_{n+\frac{1}{2}})$,  there exists
\begin{equation} \label{2.6}
\frac{\partial}{\partial t}u(x_i,t_{n+\frac{1}{2}})
=  \frac{1}{\Gamma(\gamma-1)}\int_{0}^{t_{n+\frac{1}{2}}}{\left(t_{n+\frac{1}{2}}-\tau\right)^{\gamma-2}}{{\nabla}_x^{\alpha} u(x_i,\tau)}d\tau+f\left(x_i,t_{n+\frac{1}{2}}\right).
\end{equation}
According to  (\ref{2.1}) and (\ref{2.4}) and taking  $\beta=\gamma-1$, we can write (\ref{2.6}) as
\begin{equation}\label{2.7}
\begin{split}
\frac{u(x_i,t_{n+1})-u(x_i,t_{n})}{\tau}
=\frac{\tau^{\beta}}{2}\sum_{k=0}^{n}l_k^{\beta} \nabla_x^{\alpha} \left( u(x_i,t_{n+1-k})+u(x_i,t_{n-k})\right)
+f\left(x_i,t_{n+\frac{1}{2}}\right)+\mathcal{O}(\tau^2+h^2).
\end{split}
\end{equation}
Multiplying (\ref{2.7}) by $\tau$, we have the following equation
\begin{equation}\label{2.8}
\begin{split}
u(x_i,t_{n+1})-u(x_i,t_{n})=\frac{\tau^{\gamma}}{2}\sum_{k=0}^{n}l_k^{\beta} \nabla_x^{\alpha} \left( u(x_i,t_{n+1-k})+u(x_i,t_{n-k})\right)
+\tau f\left(x_i,t_{n+\frac{1}{2}}\right)+ R_i^{n+1}
\end{split}
\end{equation}
with the residual term
\begin{equation}\label{2.9}
  |R_i^n| \leq C_u\tau(\tau^2+h^2),
\end{equation}
and  $C_u$ is a positive constant independent of $\tau$ and $h$. 
Then the full discretization of (\ref{2.8}) has the following form
\begin{equation}\label{2.10}
\begin{split}
u_i^{n+1}-u_i^n
=&\frac{\tau^{\gamma}}{2}\sum_{k=0}^{n}l_k^{\beta}  \nabla_h^{\alpha}\left( u_i^{n+1-k} + u_i^{n-k}\right)+\tau f_i^{n+\frac{1}{2}}.
\end{split}
\end{equation}

\section{Stability and convergence}
In this section, we prove that the scheme (\ref{2.10}) is unconditionally stable and convergent in discrete $L^2$ norm. Denote the grid functions  $u^n=\{u_i^n| 0 \leq i \leq M, n \geq 0 \}$
and $v^n=\{v_i^n| 0 \leq i \leq M, n \geq 0 \}$; and
\begin{equation*}
\begin{split}
&(u^n,v^n)=h\sum_{i=1}^{M-1}u_i^nv_i^n, ~~~~~||u^n||=(u^n,u^n)^{1/2}.
\end{split}
\end{equation*}
\begin{lemma} [\cite{Chen:14,Wang:15}] \label{lemma3.1}
 Let  $A_\alpha $ be given in (\ref{2.3}) with $1<\alpha<2$. Then  there exists an operator $\Lambda^\alpha$ satisfies
\begin{equation*}
-  (A_\alpha u,u)>0~~{\rm and}~~-  ( A_\alpha  u,v)=( \Lambda^\alpha u,\Lambda^\alpha v).
\end{equation*}
\end{lemma}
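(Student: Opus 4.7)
The plan is to establish that $-A_\alpha$ is symmetric positive definite, from which both conclusions follow. First I would check that $A_\alpha$ is symmetric: reading off (\ref{2.3}), the $(i,j)$ and $(j,i)$ entries both equal $w_{|i-j|+1}^\alpha$ when $|i-j|\ge 2$, while the super- and subdiagonals share the common value $w_0^\alpha+w_2^\alpha$, so $A_\alpha = A_\alpha^T$. The bulk of the work is then to show that all eigenvalues of $-A_\alpha$ are strictly positive; granting this, the operator $\Lambda^\alpha$ is obtained by spectral calculus.

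The cleanest route to positivity is via generating-function (Fourier symbol) analysis of the Toeplitz-like structure. One introduces
\begin{equation*}
\widehat{w}^\alpha(\theta) \;=\; 2w_1^\alpha + (w_0^\alpha + w_2^\alpha)(e^{i\theta}+e^{-i\theta}) + \sum_{m\ge 3} w_m^\alpha\bigl(e^{im\theta}+e^{-im\theta}\bigr),
\end{equation*}
verifies that $\widehat{w}^\alpha(\theta)<0$ for all $\theta\in[-\pi,\pi]\setminus\{0\}$ (this is the content of the second-order consistency of the scheme, since $\widehat{w}^\alpha(\theta)\sim -C_\alpha|\theta|^\alpha$ as $\theta\to 0$ and stays negative elsewhere), and then compares $A_\alpha$ to the associated Toeplitz matrix to conclude that $-A_\alpha$ is SPD for every $M$ and every $\alpha\in(1,2)$. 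This is precisely the analysis carried out in \cite{Chen:14,Wang:15}, which I would cite directly rather than reprove.

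Given that $-A_\alpha$ is symmetric positive definite, it admits a unique symmetric positive definite square root $B := (-A_\alpha)^{1/2}$. I define $\Lambda^\alpha u := Bu$, so that $\Lambda^\alpha$ plays the role of a discrete fractional derivative of order $\alpha/2$. Then for any grid functions $u, v$ vanishing at the boundary,
\begin{equation*}
-(A_\alpha u, v) \;=\; (B^2 u, v) \;=\; (Bu, Bv) \;=\; (\Lambda^\alpha u, \Lambda^\alpha v),
\end{equation*}
using symmetry of $B$ in the middle equality. Taking $v=u$ gives $-(A_\alpha u,u) = \|\Lambda^\alpha u\|^2 > 0$ for $u\neq 0$, so both claims of the lemma hold simultaneously.

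The main obstacle is the strict positive-definiteness of $-A_\alpha$. Naive Gershgorin estimates fail because the signs of the weights $w_m^\alpha$ alternate and the row sums do not deliver diagonal dominance uniformly in $\alpha$ and $M$. The nontrivial step is therefore to pin down the sign of $\widehat{w}^\alpha(\theta)$ on the whole Brillouin zone, including the endpoints $\theta=\pm\pi$, and to transfer this Fourier-symbol information to the finite symmetric matrix $A_\alpha$ (whose boundary rows deviate slightly from the pure Toeplitz form). Since the excerpt explicitly attributes the lemma to \cite{Chen:14,Wang:15}, the intended proof simply invokes that spectral bound; my write-up would record the symmetry observation, state the spectral estimate with citation, and then exhibit $\Lambda^\alpha=(-A_\alpha)^{1/2}$ as above.
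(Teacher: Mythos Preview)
Your proposal is correct and matches the paper's treatment: the paper does not prove Lemma~\ref{lemma3.1} at all but simply cites \cite{Chen:14,Wang:15}, and you correctly recognize this, invoking those references for the positive-definiteness of $-A_\alpha$ and then spelling out the construction $\Lambda^\alpha=(-A_\alpha)^{1/2}$ that makes the second identity immediate.
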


\begin{lemma}\label{lemma3.3}
Let $l_k^{\beta}$ be defined by (\ref{2.55}) with $\beta=\gamma-1$.
Then $l_k^{\beta}\geq 0$, $\forall k\geq0.$
\end{lemma}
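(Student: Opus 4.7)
The plan is to read the nonnegativity directly off the generating function by factoring it into pieces whose Taylor coefficients are manifestly nonnegative. First I would simplify the second factor of $l^{\beta}(z)$ in \eqref{2.5} by observing that
\[
1+\tfrac{1}{2}\!\left(1-\tfrac{z}{e^{\lambda\tau}}\right)=\tfrac{3}{2}\!\left(1-\tfrac{z}{3e^{\lambda\tau}}\right),
\]
so that
\[
l^{\beta}(z)=\left(\tfrac{3}{2}\right)^{-\beta}\!\left(1-\tfrac{z}{e^{\lambda\tau}}\right)^{-\beta}\!\left(1-\tfrac{z}{3e^{\lambda\tau}}\right)^{-\beta}.
\]
This factorization is important because it exhibits $l^{\beta}(z)$ as a constant multiple of a product of two binomial series $(1-w)^{-\beta}$.

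Next I would invoke the standard expansion $(1-w)^{-\beta}=\sum_{m=0}^{\infty} g_m^{-\beta}w^{m}$ where
\[
g_m^{-\beta}=(-1)^{m}\binom{-\beta}{m}=\frac{\beta(\beta+1)\cdots(\beta+m-1)}{m!}=\frac{\Gamma(\beta+m)}{m!\,\Gamma(\beta)}.
\]
For $\beta=\gamma-1\in(0,1]$, every factor $\beta,\beta+1,\ldots,\beta+m-1$ is strictly positive (with the convention $g_0^{-\beta}=1$), so $g_m^{-\beta}\ge 0$ for every $m\ge 0$. Substituting $w=z/e^{\lambda\tau}$ and $w=z/(3e^{\lambda\tau})$ respectively and applying the Cauchy product to the two series would yield exactly the closed form \eqref{2.55} for $l_k^{\beta}$, confirming that the formula is consistent with the generating function.

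Finally, reading the coefficient formula
\[
 l_k^{\beta} =e^{-\lambda k \tau}\left(\tfrac{3}{2}\right)^{-\beta}\sum_{m=0}^{k}3^{-m}\,g_m^{-\beta}\,g_{k-m}^{-\beta},
\]
each factor in each summand is nonnegative: $e^{-\lambda k\tau}>0$, $(3/2)^{-\beta}>0$, $3^{-m}>0$, and $g_m^{-\beta}, g_{k-m}^{-\beta}\ge 0$ by the preceding step. Hence $l_k^{\beta}\ge 0$ for all $k\ge 0$. There is no real obstacle here; the only delicate point is verifying once that the restriction $\beta\in(0,1]$ really suffices to keep every binomial coefficient $g_m^{-\beta}$ nonnegative—this is immediate from the product form, but it is worth recording explicitly so that the argument is not mistakenly extended to values of $\beta$ outside $(0,1]$ where the signs would alternate.
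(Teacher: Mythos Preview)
Your argument is correct and is in the same spirit as the paper's: both start from the generating function \eqref{2.5}, and the paper simply defers the details to \cite{Yang:14}, whereas you spell them out by factoring $l^{\beta}(z)=(3/2)^{-\beta}(1-z/e^{\lambda\tau})^{-\beta}(1-z/(3e^{\lambda\tau}))^{-\beta}$ and reading off the nonnegative Cauchy-product coefficients. One small remark: your final caveat is slightly overstated, since $g_m^{-\beta}\ge 0$ holds for every $\beta>0$, not merely for $\beta\in(0,1]$; the restriction to $(0,1]$ comes from the model assumption $1<\gamma\le 2$, not from any sign obstruction in the binomial coefficients.
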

\begin{proof}
  According to  (\ref{2.5}) and \cite{Yang:14}, the desired results are obtained.
\end{proof}

\begin{lemma} \label{lemma3.4}
Let $l_k^{\beta}$ be defined by (\ref{2.55}) with $\beta=\gamma-1$, $1<\gamma\leq 2$.
Then for any positive integer $N$ and  real vector $\left( v_i^0,v_i^1,\ldots,v_i^N \right) \in \mathbb{R}^{N+1}$, it holds that
$$\sum_{n=0}^{N}\left(\sum_{k=0}^{n}l_k^{\beta}  v_i^{n-k}\right)v_i^n \geq 0,~~i=1,2,\ldots, M-1.$$
\end{lemma}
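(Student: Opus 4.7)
The plan is to reduce the claimed quadratic-form inequality to the non-negativity of $\operatorname{Re} l^\beta(e^{i\theta})$ on the unit circle, and then to invoke Parseval's identity applied to the one-sided discrete convolution defined by the kernel $\{l_k^\beta\}$.

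First I would rewrite the generating function (\ref{2.5}) in the compact product form
\[
l^\beta(z)=2^\beta\bigl[(1-ze^{-\lambda\tau})(3-ze^{-\lambda\tau})\bigr]^{-\beta}=2^\beta P(z)^{-\beta},\qquad P(z):=3-4ze^{-\lambda\tau}+z^2 e^{-2\lambda\tau}.
\]
Setting $a:=e^{-\lambda\tau}\in(0,1]$ (this is precisely where the standing hypothesis $\lambda\ge 0$ enters) and evaluating on the unit circle $z=e^{i\theta}$, a short computation gives
\[
\operatorname{Re} P(e^{i\theta})=3-4a\cos\theta+a^2\cos 2\theta=2a^2\cos^2\theta-4a\cos\theta+(3-a^2).
\]
Viewed as a quadratic in $c=\cos\theta$, its discriminant equals $16a^2-8a^2(3-a^2)=8a^2(a^2-1)\le 0$, so the quadratic is strictly positive. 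Hence $|\arg P(e^{i\theta})|<\pi/2$, and since $\beta=\gamma-1\in(0,1]$ we get $|\arg l^\beta(e^{i\theta})|=\beta\,|\arg P(e^{i\theta})|<\pi/2$, which yields $\operatorname{Re} l^\beta(e^{i\theta})>0$ for every $\theta$.

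Next I would extend the vector $(v_i^0,\ldots,v_i^N)$ by zero outside $\{0,\ldots,N\}$ and introduce the trigonometric polynomial $V(\theta):=\sum_{n=0}^{N}v_i^n e^{in\theta}$. Because the one-sided convolution $\sum_{k=0}^n l_k^\beta v_i^{n-k}$ has Fourier symbol $l^\beta(e^{i\theta})V(\theta)$, Parseval's identity for real sequences delivers
\[
\sum_{n=0}^{N}\Bigl(\sum_{k=0}^{n}l_k^\beta v_i^{n-k}\Bigr)v_i^n=\frac{1}{2\pi}\int_{-\pi}^{\pi}l^\beta(e^{i\theta})\,|V(\theta)|^2\, d\theta.
\]
As the left-hand side is real, it equals $\frac{1}{2\pi}\int_{-\pi}^{\pi}\operatorname{Re} l^\beta(e^{i\theta})\,|V(\theta)|^2\, d\theta$, which is $\ge 0$ by the previous step.

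The main obstacle is the first step, namely producing a clean closed form for $\operatorname{Re} l^\beta(e^{i\theta})$ and proving its non-negativity; that work collapses to the discriminant inequality $a^2\le 1$, and it is precisely this inequality that forces $\lambda\ge 0$. The Parseval identification and the final sign conclusion are routine once the symbol positivity is in hand. A minor technical caveat is that in the borderline case $\lambda=0$, $\beta=1$ the symbol $l^\beta(e^{i\theta})$ has a non-integrable singularity at $\theta=0$, but this causes no problem because $V(\theta)$ is a trigonometric polynomial vanishing there to a compensating order, or one may first establish the inequality for $\beta<1$ and pass to the limit by continuity of the finite sum in $\beta$.
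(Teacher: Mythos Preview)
Your argument is correct and follows the same overall strategy as the paper: reduce the quadratic form to the non-negativity of the symbol $\operatorname{Re} l^\beta(e^{i\theta})$ on the unit circle, and then pass from symbol to quadratic form via Fourier analysis. In the paper the latter step is phrased as writing $\sum_{n}\sum_{k}l_k^\beta v_i^{n-k}v_i^n = V_i L^\beta V_i^T$ for the symmetric Toeplitz matrix $L^\beta$ and invoking the Grenander--Szeg\H{o} theorem; this is exactly your Parseval identity recast in matrix language. The one substantive tactical difference is in how the symbol positivity is checked. The paper writes the two factors of $l^\beta(e^{ix})$ in polar form, obtaining $f(\beta,x)=r(x)\cos\beta(\theta_1+\theta_2)$ with $\theta_1=\arctan\frac{\sin x}{d-\cos x}$ and $\theta_2=\arctan\frac{\sin x}{3d-\cos x}$ (where $d=e^{\lambda\tau}=1/a$), and then shows $\theta_1+\theta_2\le \pi/2$ by the elementary comparisons $\theta_1\le \pi/2-x/2$ and $\theta_2\le \theta_3<x/2$. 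Your route via $\operatorname{Re}P(e^{i\theta})>0$ and the discriminant $8a^2(a^2-1)\le 0$ reaches the identical conclusion $|\arg P|<\pi/2$ more compactly, and has the virtue of making explicit that the hypothesis $\lambda\ge 0$ enters precisely as $a\le 1$. Your integrability caveat at $\lambda=0$, $\beta=1$ is accurate (for $\beta<1$ the symbol is in $L^1$ so no issue arises), and the continuity-in-$\beta$ patch you propose is legitimate since the finite sum depends on only $l_0^\beta,\dots,l_N^\beta$, each continuous in $\beta$.
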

\begin{proof}
By the mathematical induction method, we can prove that
\begin{equation*}
\sum_{n=0}^{N}\left(\sum_{k=0}^{n}l_k^{\beta}  v_i^{n-k}\right)v_i^n=V_iL^{\beta}V_i^T, ~~i=1,2,\ldots, M-1,
\end{equation*}
 where
 $$V_i=\left( v_i^0,  v_i^1,\ldots,  v_i^{N-1}, v_i^N \right)$$
  and the real symmetric matrix
 \begin{equation}\label{3.1}
L^{\beta}=\left [ \begin{matrix}
l^{\beta}_0  &\frac{l^{\beta}_1}{2}&\frac{l^{\beta}_2}{2}    &      \cdots   & \frac{l^{\beta}_{N-1}}{2}     &  \frac{l^{\beta}_N}{2}  \\
\frac{l^{\beta}_1}{2}&  l^{\beta}_0   &\frac{l^{\beta}_1}{2}&  \frac{l^{\beta}_2}{2}   &     \cdots   & \frac{l^{\beta}_{N-1}}{2} \\
\frac{l^{\beta}_2}{2}            &\frac{l^{\beta}_1}{2}&l^{\beta}_0        & \frac{l^{\beta}_1}{2}&     \ddots              & \vdots  \\
\vdots                   &          \ddots         &       \ddots            &        \ddots            &      \ddots             &  \frac{l^{\beta}_2}{2} \\
  \frac{l^{\beta}_{N-1}}{2}  &   \ddots         &       \ddots        &        \ddots            &   l^{\beta}_0        & \frac{l^{\beta}_1}{2} \\
\frac{l^{\beta}_N}{2}   &   \frac{l^{\beta}_{N-1}}{2}   &   \cdots          &         \cdots           &\frac{l^{\beta}_1}{2}& l^{\beta}_0
 \end{matrix}
 \right ].
\end{equation}
Next we prove that the real symmetric  matrix  $L^{\beta}$ defined in (\ref{3.1}) is positive semi-definite.
With $J=\sqrt{-1}$, we know that  the generating function \cite[p. 12-14]{Chan:07} of $L^{\beta}$ is 
\begin{equation}\label{3.2}
\begin{split}
 f(\beta,x)
  &=\frac{1}{2}\sum_{k=0}^{\infty}l^{\beta}_ke^{Jkx}+\frac{1}{2}\sum_{k=0}^{\infty}l^{\beta}_ke^{-Jkx}=\sum_{k=0}^{\infty}l^\beta_k\cos(kx)=\frac{1}{2}l^{\beta}(e^{Jx})+\frac{1}{2}l^{\beta}(e^{-Jx})\\
  &=\frac{1}{2}\left(1-\frac{e^{Jx}}{e^{\lambda \tau}}\right) ^{-\beta}\left(  1+  \frac{1}{2}\left( 1- \frac{e^{Jx}}{e^{\lambda \tau}} \right)\right) ^{-\beta}
  +\frac{1}{2}\left(1-\frac{e^{-Jx}}{e^{\lambda \tau}}\right) ^{-\beta}\left(  1+  \frac{1}{2}\left( 1- \frac{e^{-Jx}}{e^{\lambda \tau}} \right)\right) ^{-\beta}.
\end{split}
\end{equation}
Since $ f(\beta,x)$  is an even function and $2\pi$-periodic continuous real-valued functions defined on $[-\pi,\pi]$, we just need to consider its principal value on $[0,\pi]$.
Next we prove that $f(\beta,x)$ defined in (\ref{3.2}) is nonnegative. Denoting $d=e^{\lambda \tau}\geq 1$, we have
\begin{equation*}
\begin{split}
\left(1-\frac{e^{\pm Jx}}{e^{\lambda \tau}}\right)^{-\beta}
&=d^{\beta}\left(d-e^{\pm Jx}\right)^{-\beta}
=d^{\beta}\left((d-\cos x)^2+\sin^2 x  \right)^{-\frac{{\beta}}{2}} e^{\pm J\beta\theta_1};\\
\left(1 + \frac{1}{2}(1-e^{\pm Jx})\right)^{-\beta}
&=(2d)^{\beta}\left(3d-e^{\pm Jx}\right)^{-\beta}=(2d)^{\beta}\left((3d-\cos x)^2+\sin^2 x  \right)^{-\frac{{\beta}}{2}} e^{\pm J\beta\theta_2};
  \end{split}
\end{equation*}
where
$$\theta_1=\arctan\left( \frac{\sin x}{d-\cos x} \right)~~{\rm and}~~\theta_2=\arctan\left( \frac{\sin x}{3d-\cos x} \right).$$
It yields
\begin{equation*}
\begin{split}
 f(\beta,x)=(2d^2)^{\beta}\left((d-\cos x)^2+\sin^2 x  \right)^{-\frac{{\beta}}{2}}\left((3d-\cos x)^2+\sin^2 x  \right)^{-\frac{{\beta}}{2}}
  \cos  \beta\left( \theta_1+\theta_2  \right ).
\end{split}
\end{equation*}

When $x=0$, according to Lemma  \ref{lemma3.3} and Eq. (\ref{3.2}), we have  $f(\beta,x)=\sum_{k=0}^{\infty}l_k^{\beta}\geq 0$.

When $x=\pi$, using  (\ref{3.2})  and (\ref{2.5}), we have
$$f(\beta,x)=\sum_{k=0}^{\infty}l^\beta_k\cos(k\pi)=\sum_{k=0}^{\infty}(-1)^kl^\beta_k
=\left(1+\frac{1}{e^{\lambda \tau}}\right) ^{-\beta}\left(  1+  \frac{1}{2}\left( 1+ \frac{1}{e^{\lambda \tau}} \right)\right) ^{-\beta}>0.$$
Next we consider $x\in(0,\pi)$.
Using  $\tan \left(\frac{x}{2}\right)=\frac{\sin x}{1+\cos x}>\frac{\sin x}{3-\cos x}:=\tan \theta_3\geq 0$ and
$$0\leq \tan(\theta_1)=\frac{\sin x}{d-\cos x}\leq \frac{\sin x}{1-\cos x}=\tan\left(\frac{\pi}{2}-\frac{x}{2}  \right),~~
0\leq \tan(\theta_2)=\frac{\sin x}{3d-\cos x}\leq \frac{\sin x}{3-\cos x}=\tan(\theta_3),  $$
 we get $ 0 \leq \theta_1+\theta_2\leq \frac{\pi}{2}-\frac{x}{2}+\theta_3\leq \frac{\pi}{2}$.
Hence $ f(\beta,x)\geq 0$ for $\beta\in[-1,1].$

From the Grenander-Szeg\"{o} theorem \cite[p. 13-14]{Chan:07},
it implies that $L^{\beta}$  is a real symmetric positive semi-definite matrix. The proof is completed.
\end{proof}

\begin{theorem}\label{theorem3.9}
The difference scheme (\ref{2.10})  with $1<\alpha, \gamma \leq 2$ is unconditionally stable.
\end{theorem}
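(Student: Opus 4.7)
The strategy is a discrete energy argument tailored to fractional-memory Crank--Nicolson schemes: pair the scheme (\ref{2.10}) with the Crank--Nicolson-type multiplier $u^{n+1}+u^n$, exploit the symmetric factorization of the spatial operator from Lemma \ref{lemma3.1} and the positive semi-definiteness of the temporal kernel from Lemma \ref{lemma3.4} to absorb the diffusion-memory term, and close with Cauchy--Schwarz and a discrete Gronwall inequality.

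First I would take the discrete $L^2$ inner product of (\ref{2.10}) with $u^{n+1}+u^n$. The left side collapses by the elementary identity $(u^{n+1}-u^n,u^{n+1}+u^n)=\|u^{n+1}\|^2-\|u^n\|^2$. On the right side, using $\nabla_h^\alpha v=-\kappa_\alpha/(\Gamma(4-\alpha)h^\alpha)\,A_\alpha v$ together with Lemma \ref{lemma3.1}, every quadratic pairing $(\nabla_h^\alpha v,w)$ becomes $-c_h(\Lambda^\alpha v,\Lambda^\alpha w)$, where $c_h:=-\kappa_\alpha/(\Gamma(4-\alpha)h^\alpha)>0$ thanks to the sign of $\kappa_\alpha$ for $\alpha\in(1,2)$. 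Setting $V^n:=\Lambda^\alpha(u^{n+1}+u^n)$, the scheme yields the local energy identity
$$
\|u^{n+1}\|^2-\|u^n\|^2
=-\frac{c_h\tau^{\gamma}}{2}\sum_{k=0}^{n} l_k^{\beta}\bigl(V^{n-k},V^{n}\bigr)
+\tau\bigl(f^{n+\frac12},u^{n+1}+u^{n}\bigr).
$$

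Next I would sum this identity from $n=0$ to $n=N-1$ to obtain $\|u^N\|^2-\|u^0\|^2$ on the left plus a double time sum on the right. The key observation is that
$$
\sum_{n=0}^{N-1}\sum_{k=0}^{n} l_k^{\beta}\bigl(V^{n-k},V^{n}\bigr)
=h\sum_{i=1}^{M-1}\sum_{n=0}^{N-1}\Bigl(\sum_{k=0}^{n}l_k^{\beta}V_i^{n-k}\Bigr)V_i^{n}
=h\sum_{i=1}^{M-1}\widetilde V_iL^{\beta}\widetilde V_i^{T},
$$
with $\widetilde V_i$ the time-vector of the $i$-th component of $V^n$. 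By Lemma \ref{lemma3.4}, $L^{\beta}$ is positive semi-definite, hence each inner term is non-negative and the weighted sum over $i$ is non-negative as well. The diffusion-memory contribution therefore has the correct sign and can be discarded, giving
$$
\|u^N\|^2\le \|u^0\|^2+\tau\sum_{n=0}^{N-1}\bigl(f^{n+\frac12},u^{n+1}+u^{n}\bigr).
$$
Bounding the forcing term by Cauchy--Schwarz and Young's inequality $2ab\le\varepsilon a^2+b^2/\varepsilon$, and then invoking the standard discrete Gronwall lemma, yields $\|u^n\|\le C\bigl(\|u^0\|+\max_k\|f^{k+\frac12}\|\bigr)$ with $C=C(T)$ independent of $\tau$ and $h$, which is exactly unconditional stability.

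The main obstacle is recognizing that the Crank--Nicolson multiplier, combined with the factorization of Lemma \ref{lemma3.1}, reshapes the diffusion-memory term into precisely the quadratic form governed by the matrix $L^{\beta}$ of Lemma \ref{lemma3.4}; once this is seen, everything else is routine. A secondary pitfall is sign bookkeeping for $\kappa_\alpha$ (negative on $(1,2)$) which is what makes $c_h$ strictly positive and guarantees the energy contribution of the diffusion term has the favorable sign.
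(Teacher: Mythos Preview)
Your proof is correct and follows essentially the same energy argument as the paper: multiply by the Crank--Nicolson factor $u^{n+1}+u^n$, use Lemma~\ref{lemma3.1} to factor the spatial operator, sum in time, and invoke Lemma~\ref{lemma3.4} to give the diffusion--memory term the favorable sign. The only difference in framing is that the paper applies the argument to the perturbation $\epsilon^n=\widetilde u^n-u^n$, so the forcing term cancels and one obtains the clean contraction $\|\epsilon^N\|\le\|\epsilon^0\|$ without any Gronwall step; your version, working with $u^n$ directly, yields the slightly stronger continuous-dependence estimate at the cost of the extra Cauchy--Schwarz/Gronwall bookkeeping.
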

\begin{proof}
Let $\widetilde{u}_{i}^n~(i=0,1,\ldots,M;\,n=0,1,\ldots,N)$ be the approximate solution of $u_i^n$,
which is the exact solution of the difference scheme (\ref{2.10}).
Putting $\epsilon_i^n=\widetilde{u}_i^n-u_i^n$, and denoting $\epsilon^n=[\epsilon_0^n,\epsilon_1^n,\ldots, \epsilon_{M}^n]$,
 then from (\ref{2.10}) we obtain the following perturbation equation
 \begin{equation}\label{3.3}
\begin{split}
\epsilon_i^{n+1}-\epsilon_i^n
=&\frac{\tau^{\gamma}}{2}\sum_{k=0}^{n}l_k^{\beta}  \nabla_h^{\alpha}\left( \epsilon_i^{n+1-k} + \epsilon_i^{n-k}\right).
\end{split}
\end{equation}
Multiplying (\ref{3.3}) by $h\left( \epsilon_i^{n+1}+\epsilon_i^n \right)$ and summing up for $i$ from $1$ to $M-1$, we have
\begin{equation*}
||\epsilon^{n+1}||^2-||\epsilon^n||^2=\frac{\tau^{\gamma}}{2}\sum_{k=0}^{n}l_k^{\beta}\left( \nabla_h^{\alpha} \left(e^{n+1-k}+ e^{n-k}\right),e^{n+1}+e^n\right).
\end{equation*}
Summing up for $n$ from $0$ to $N$  on both sides of the above equation, it yields
  \begin{equation}\label{3.4}
||e^{N+1}||^2-||e^0||^2=\frac{\tau^{\gamma}}{2} \sum_{n=0}^{N}\sum_{k=0}^{n}l_k^{\beta}\left( \nabla_h^{\alpha} \left(e^{n+1-k}+ e^{n-k}\right),e^{n+1}+e^n\right).
\end{equation}
According to (\ref{2.2}) and Lemmas \ref{lemma3.1}, \ref{lemma3.4}, we get
\begin{equation}\label{3.5}
\begin{split}
&\frac{\tau^{\gamma}}{2} \sum_{n=0}^{N}\sum_{k=0}^{n}l_k^{\beta}\left( \nabla_h^{\alpha} \left(e^{n+1-k}+ e^{n-k}\right),e^{n+1}+e^n\right)\\
&=\frac{|\kappa_{\alpha}|\tau^{\gamma}}{2{h^\alpha}}\sum_{n=0}^{N}\sum_{k=0}^{n}l_k^{\beta}\left(  A_\alpha \left(e^{n+1-k}+ e^{n-k}\right),e^{n+1}+e^n\right)\\
&=-\frac{|\kappa_{\alpha}|\tau^{\gamma}}{2{h^\alpha}}\sum_{n=0}^{n}\sum_{k=0}^{n}l_k^{\beta}\left(  \Lambda^\alpha \left(e^{n+1-k}+ e^{n-k}\right), \Lambda^\alpha (e^{n+1}+e^n)\right)
\leq 0.
\end{split}
\end{equation}
Using  (\ref{3.4}) and (\ref{3.5}), for any positive integer $N$, it yields $||e^{N}||\leq ||e^0||.$
The proof is completed.
\end{proof}

\begin{theorem}\label{theorem4.4}
Let $u(x_i,t_n)$ be the exact solution of (\ref{1.1}) with $1<\alpha, \gamma \leq 2$, and $u_i^n$ the  solution of
the finite difference scheme (\ref{2.10}). Then
\begin{equation*}
  \begin{split}
||u(x_i,t_n)-u_i^n||_2 \leq   2 C_ub^\frac{1}{2}T(\tau^2+h^2), \quad i=0,1,\ldots,M;\,n=0,1,\ldots,N,
  \end{split}
  \end{equation*}
where $C_u$ is defined by  (\ref{2.9}) and  $(x_i,t_n)\in (0,b) \times (0,T]$ with $N\tau\leq T$.
\end{theorem}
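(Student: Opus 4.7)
The plan is to reuse the energy estimate from the stability proof with an added forcing coming from the local truncation error. Let $e_i^n = u(x_i,t_n)-u_i^n$ denote the error. Subtracting the discrete scheme \eqref{2.10} from the consistency identity \eqref{2.8}, and using that the exact initial data are loaded into the scheme so that $e_i^0=0$, one obtains the perturbed error equation
\begin{equation*}
e_i^{n+1}-e_i^n=\frac{\tau^{\gamma}}{2}\sum_{k=0}^{n}l_k^{\beta}\nabla_h^{\alpha}\bigl(e_i^{n+1-k}+e_i^{n-k}\bigr)+R_i^{n+1},
\end{equation*}
with $|R_i^{n+1}|\leq C_u\tau(\tau^2+h^2)$ from \eqref{2.9}.

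Next I would mimic the stability argument: multiply the error equation by $h(e_i^{n+1}+e_i^n)$ and sum over $i=1,\ldots,M-1$. Using $(e^{n+1}-e^n,e^{n+1}+e^n)=\|e^{n+1}\|^2-\|e^n\|^2$, and then summing over $n$ from $0$ to $N'-1$ for an arbitrary $N'\leq N$, I get
\begin{equation*}
\|e^{N'}\|^2=\frac{\tau^{\gamma}}{2}\sum_{n=0}^{N'-1}\sum_{k=0}^{n}l_k^{\beta}\bigl(\nabla_h^{\alpha}(e^{n+1-k}+e^{n-k}),e^{n+1}+e^n\bigr)+\sum_{n=0}^{N'-1}(R^{n+1},e^{n+1}+e^n).
\end{equation*}
By Lemma \ref{lemma3.1} (which produces a square of $\Lambda^\alpha$ after re-indexing) together with Lemma \ref{lemma3.4} applied to the quadratic form in $\Lambda^\alpha e^{n}$, exactly as in \eqref{3.5}, the first right-hand side sum is non-positive.

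Then I would bound the residual contribution with Cauchy--Schwarz: $\|R^{n+1}\|\leq (h(M-1))^{1/2}C_u\tau(\tau^2+h^2)\leq b^{1/2}C_u\tau(\tau^2+h^2)$. Setting $E_{N'}=\max_{0\leq n\leq N'}\|e^n\|$, the residual term is bounded by $2N'\tau\,b^{1/2}C_u(\tau^2+h^2)\,E_{N'}\leq 2b^{1/2}C_uT(\tau^2+h^2)\,E_{N'}$ since $N'\tau\leq T$. Thus $\|e^{N'}\|^2\leq 2b^{1/2}C_uT(\tau^2+h^2)\,E_{N'}$, and choosing $N'$ that realizes $E_{N'}$ on the left yields $E_{N'}\leq 2C_ub^{1/2}T(\tau^2+h^2)$, which is the desired bound.

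The only mildly delicate step is verifying that the double-sum energy term remains non-positive once $n$ ranges all the way to $N'-1$; this is why Lemma \ref{lemma3.4} was stated for arbitrary $N$ with $\Lambda^\alpha e^n$ in place of $v_i^n$. No discrete Gr\"onwall with an exponential factor is needed: because the fractional integral term produces a non-positive contribution, one obtains a clean quadratic-in-$E_{N'}$ inequality that closes after a single division.
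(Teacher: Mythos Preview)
Your proposal is correct and follows essentially the same approach as the paper: derive the error equation by subtraction, apply the energy identity via multiplication by $h(e_i^{n+1}+e_i^n)$, sum in time, drop the non-positive fractional term using Lemmas~\ref{lemma3.1} and~\ref{lemma3.4} exactly as in \eqref{3.5}, bound the residual with Cauchy--Schwarz and $\|R^{n+1}\|\le b^{1/2}C_u\tau(\tau^2+h^2)$, and close by the quadratic-in-$E$ inequality with the max trick. Your write-up of the residual estimate and the closing step is in fact a little cleaner than the paper's, which leaves an index $s$ unsummed after the Cauchy--Schwarz step.
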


\begin{proof}
Denote $e_i^n=u(x_i,t_n)-u_i^n$ and    $e^n=[e_0^n, e_1^n,\ldots, e_{M}^n]^T$.
 Subtracting (\ref{2.10}) from (\ref{2.8}) and using $e^0=0$, we obtain
\begin{equation}\label{3.6}
e_i^{n+1}-e_i^n
=\frac{\tau^{\gamma}}{2}\sum_{k=0}^{n}l_k^{\beta} \nabla_h^{\alpha}\left( e_i^{n+1-k}+ e_i^{n-k}\right)+R_i^{n+1}.
\end{equation}
Multiplying (\ref{3.6}) by $h\left( e_i^{n+1}+e_i^n \right)$ and summing up for $i$ from $1$ to $M-1$, we have
\begin{equation*}
||e^{n+1}||^2-||e^n||^2=\frac{\tau^{\gamma}}{2}\sum_{k=0}^{n}l_k^{\beta}\left( \nabla_h^{\alpha} \left(e^{n+1-k}+ e^{n-k}\right),e^{n+1}+e^n\right)+\left(R^{n+1},e^{n+1}+e^n \right).
\end{equation*}
Replacing $n$ with $s$, there exists
  \begin{equation*}
||e^{s+1}||^2-||e^s||^2=\frac{\tau^{\gamma}}{2}\sum_{j=0}^{s}l_j^{\beta}\left( \nabla_h^{\alpha} \left(e^{s+1-j}+ e^{s-j}\right),e^{s+1}+e^s\right)+\left(R^{s+1},e^{s+1}+e^s \right).
\end{equation*}
Summing up for $s$ from $0$ to $n$ and using (\ref{3.5}), there exists
\begin{equation*}
\begin{split}
||e^{n+1}||^2
&=\frac{\tau^{\gamma}}{2}\sum_{s=0}^{n}\sum_{j=0}^{s}l_j^{\beta}\left( \nabla_h^{\alpha} \left(e^{s+1-j}+ e^{s-j}\right),e^{s+1}+e^s\right)+\sum_{s=0}^{n}\left(R^{s+1},e^{s+1}+e^s \right)\\
&\leq \sum_{s=0}^{n}\left(R^{s+1},e^{s+1}+e^s \right).
\end{split}
\end{equation*}
Using (\ref{2.9}) and above inequality and the Cauchy-Schwarz inequality, it yields
\begin{equation*}\begin{split}
||e^{n+1}||^2
&\leq h\sum_{s=0}^{n}\sum_{i=1}^{M-1}|R_i^{s+1}|\cdot \left(|e_i^{s+1}|+|e_i^s| \right) \\
&\leq C_uT(\tau^2+h^2)\sum_{i=1}^{M-1}h \left(|e_i^{s+1}|+|e_i^s| \right)\\
&\leq C_uT(\tau^2+h^2)\left(\sum_{i=1}^{M-1}\left|\sqrt{h}\right|^2 \right)^\frac{1}{2}
\left[\left(\sum_{i=1}^{M-1}\left|\sqrt{h}e_i^{s+1}\right|^2\right)^\frac{1}{2}
+\left(\sum_{i=1}^{M-1}\left|\sqrt{h}e_i^{s}\right|^2\right)^\frac{1}{2}\right] \\
&\leq C_ub^\frac{1}{2}T(\tau^2+h^2)\left(||e^{s+1}||+||e^s|| \right)\\
&\leq 2\sigma C_ub^\frac{1}{2}T(\tau^2+h^2),
\end{split}
\end{equation*}
where $\sigma=\max\limits_{0\leq s\leq n+1}||e^s||$.
Taking the maximum over $n$ on both sides of above equation, there exists $\sigma^2 \leq  2\sigma C_ub^\frac{1}{2}T(\tau^2+h^2)$,
which leads to $\sigma \leq  2 C_ub^\frac{1}{2}T(\tau^2+h^2)$.
Hence $$||e^n||\leq \max\limits_{0\leq s\leq n+1}||e^s||\leq  2 C_ub^\frac{1}{2}T(\tau^2+h^2).$$
The proof is completed.
\end{proof}

\section{Numerical Results}
Consider the integro-differential equation  (\ref{1.1}) on a finite domain  $0< x < 1 $,  $0<t \leq 1/2$.
Without loss of generality, we add a force term $f(x,t)$ on the right side of  (\ref{1.1}).
Then  the forcing function is
\begin{equation*}
\begin{split}
 f(x,t)=&\left(3e^{-\lambda t}t^2-\lambda e^{-\lambda t}t^3\right)x^2(x-1)^2   +\frac{\Gamma(4)}{2\Gamma(3+\gamma)\cos(\alpha \pi/2)}e^{-\lambda t}t^{2+\gamma}\\
 & \times \left[ 2\frac{x^{2-\alpha}+(1-x)^{2-\alpha}}{\Gamma(3-\alpha)}   -12\frac{x^{3-\alpha}+(1-x)^{3-\alpha}}{\Gamma(4-\alpha)}
   +24\frac{x^{4-\alpha}+(1-x)^{4-\alpha}}{\Gamma(5-\alpha)} \right],
  \end{split}
\end{equation*}
the initial condition $u(x,0)=0 $, and the boundary conditions $u(0, t)=u(1,t)=0$. And (\ref{1.1}) has the exact
solution $$u(x,t)=e^{-\lambda t}t^3x^2(1-x)^2. $$
\begin{table}[h]\fontsize{9.5pt}{12pt}\selectfont
 \begin{center}
  \caption {The maximum errors and convergence orders for  (\ref{2.10}) with  $h=\tau$ and $\lambda=0.1$.} \vspace{5pt}
\begin{tabular*}{\linewidth}{@{\extracolsep{\fill}}*{10}{c}}                                    \hline  
$\tau$& $\gamma=2,\alpha=1.5$&  Rate       & $\gamma=1.3,\alpha=1.7$  & Rate       & $\gamma=1.7,\alpha=1.3$ &   Rate    \\\hline
~~~1/20&   5.2886e-05   &           & 3.8119e-05     &          & 4.7519e-05    &     \\
~~~1/40&   1.4084e-05   &  1.91     & 9.7938e-06     & 1.96     & 1.2539e-05    & 1.92 \\
~~~1/80&   3.6352e-06   &  1.95     & 2.4815e-06     & 1.98     & 3.2206e-06    & 1.96  \\
~~~1/160&  9.2322e-07   &  1.98     & 6.2446e-07     & 1.99     &  8.1607e-07   & 1.98   \\ \hline
    \end{tabular*}\label{table:1}
  \end{center}
\end{table}

Table \ref{table:1}  shows  the maximum errors  at time $T=1/2$
with $h=\tau$; and  the numerical results confirm that the scheme (\ref{2.10}) has the global truncation error $\mathcal{O} (\tau^2+h^2)$.

\section{Conclusion}
With numerical experiments and detailed theoretical analysis, we construct the second-order schemes for the space-time tempered fractional diffusion-wave equation. The corresponding algorithms, theoretical and numerical results can also be extended to the problems discussed in \cite{Deng:15,Garg:14,Yang:14}.

\section*{Acknowledgments}This work was supported by NSFC 11601206 and 11671182, the Fundamental Research Funds for the Central Universities under Grant No. lzujbky-2016-105.


{
\small
\bibliographystyle{ieee}
\bibliography{CASSreference}


}
\end{spacing}
\end{document}